\title{\LARGE \bf On data-driven control: informativity of noisy input-output data with cross-covariance bounds
}
\author{Tom R.V. Steentjes, Mircea Lazar, Paul M.J. Van den Hof
\thanks{T.R.V. Steentjes, M. Lazar and P.M.J. Van den Hof are with the Department of Electrical Engineering, Eindhoven University of Technology, The Netherlands. E-mails: \texttt{\{t.r.v.steentjes, m.lazar, p.m.j.vandenhof\}@tue.nl}}
\thanks{This project has received funding from the European Research Council (ERC), Advanced Research Grant SYSDYNET, under the European Union's Horizon 2020 research and innovation programme (grant No. 694504).}
}
\newtheorem{thm}{Theorem}}
\newtheorem{lemma}{Lemma}}
\newtheorem{definition}{Definition}}
\newtheorem{remark}{Remark}}
\newtheorem{proposition}{Proposition}}
\begin{document}
\maketitle
\thispagestyle{empty}
\begin{abstract}
In this paper we develop new data informativity based controller synthesis methods that extend existing frameworks in two relevant directions: a more general noise characterization in terms of cross-covariance bounds and informativity conditions for control based on input-output data. Previous works have derived necessary and sufficient informativity conditions for noisy input-state data with quadratic noise bounds via an S-procedure. Although these bounds do not capture cross-covariance bounds in general, we show that the S-procedure is still applicable for obtaining non-conservative conditions on the data. Informativity-conditions for stability, $\mathcal{H}_\infty$ and $\mathcal{H}_2$ control are developed, which are sufficient for input-output data and also necessary for input-state data. Simulation experiments illustrate that cross-covariance bounds can be less conservative for informativity, compared to norm bounds typically employed in the literature.
\end{abstract}

\section{Introduction}
When mathematical models of dynamical systems are not available, data plays an essential role in the process of learning system characteristics. Indeed, data can contain information about the system from which a model of the system can be derived or a controller can be learned, either from a data-based model or directly from the data. A key problem for data-driven control is to determine whether a set of data collected from a system contains enough information to design a controller, independent of the methodology.

An indirect approach for controller design from data consists of two steps: obtaining a model from data through system identification \cite{ljung1999} and subsequently designing a controller via a model-based method. In the field of identification for control, the problem of determining a suitable model for controller design is considered~\cite{vandenhofetal1995}, aiming at minimizing performance degradation due to model mismatching. If the data used for obtaining a model are sufficiently rich for identification, is determined by a property called informativity.

Even if data are not informative for identification, data can still be informative for controller design. Necessary and sufficient conditions for informativity of data for control were developed in \cite{waarde2020tac} for noiseless input-state data. These results were extended in \cite{vanwaarde2020noisytac} for noisy input-state data with prior knowledge on the noise in the form of quadratic bounds, via a matrix variant of the S-procedure. Quadratic noise bounds play a key role in data-driven controller design~\cite{vanwaarde2020noisytac},~\cite{persis2020},~\cite{berberich2021}, distributed controller design~\cite{steentjes2021cdc} and dissipativity analysis~\cite{koch2020},~\cite{vanwaarde2021dis} from data, and represent, for example, magnitude, energy and variance bounds on the noise.

In this paper, we consider the problem of determining informativity of \emph{input-output} and input-state data for control with prior knowledge of the noise in the form of a sample \emph{cross-covariance} type bound with respect to a user-chosen instrumental signal. Bounds on the sample cross-covariance were introduced in \cite{hakvoort95} as an alternative to magnitude bounds in parameter bounding identification, given its overly conservative noise characterization, cf. \cite{bisoffi2021} for a comparison of instantaneous and (quadratic) energy type bounds for data-driven control. Our approach to data-driven control extends existing frameworks in two relevant directions: a more general noise characterization in terms of cross-covariance bounds with practical relevance and informativity conditions for control based on input-output data. We provide sufficient conditions for informativity for stabilization, $\mathcal{H}_\infty$ and $\mathcal{H}_2$ control, which are also necessary for input-state data.
\section{Input-output data: cross-covariance bounds}
Consider a class of linear systems described by
\begin{align} \label{eq:iosys}
A(q^{-1})y(t)=B(q^{-1})u(t)+e(t),
\end{align}
with $A(\xi)\in\mathbb{R}^{p\times p}[\xi]$ and $B(\xi)\in\mathbb{R}^{p\times m}[\xi]$ polynomial matrices, given by $A(\xi)=I+A_1\xi +A_2\xi^2+\cdots +A_n\xi^l$ and $B(\xi)=B_0+B_1\xi+B_2\xi^2+\cdots +B_l\xi^l$ and $q^{-1}$ is the delay operator, i.e., $q^{-1}x(t)=x(t-1)$. By defining $\zeta(t):=\operatorname{col}(y(t-1),\dots, y(t-l),u(t-1),\dots, u(t-l))$, $\bar{A}:=\operatorname{row}(-A_1,\dots, -A_l)$ and $\bar{B}:=\operatorname{row}(B_1,\dots, B_l)$, we can write~\eqref{eq:iosys} equivalently as
\begin{align}
&y(t)=B_0u(t)+e(t)+\begin{bmatrix}
\bar{A} & \bar{B}
\end{bmatrix}\zeta(t), \label{eq:iosyszo}
\end{align}
Hence, with $\zeta\in\mathbb{R}^{n}$ as a state, a state-space representation~is
\begin{align} 
\zeta(t+1)\!&=\!\!\underbrace{\left[\begin{array}{cc|cc}
\multicolumn{2}{c|}{\bar{A}}  & \multicolumn{2}{c}{\bar{B}} \\ \hline I & 0 & 0 & 0\\ 0 & 0 & 0 & 0\\ 0 & 0 & I & 0
\end{array}\right]}_{=:A_z}\!\!\zeta(t)+\!\!\underbrace{\left[\begin{array}{c}
B_0\\ \hline 0\\ I \\ 0
\end{array}\right]}_{=:B_z}\!\!u(t)+\!\!\underbrace{\left[\begin{array}{c}
I\\ \hline 0\\ 0\\ 0
\end{array}\right]}_{=:H_z}\!\!e(t),\nonumber\\
y(t)&=\begin{bmatrix}
\bar{A} & \bar{B}
\end{bmatrix}\zeta(t)+B_0u(t)+e(t),\label{eq:iosysz}
\end{align}
Notice that \eqref{eq:iosysz} is a non-minimal representation of order $n=(p+m)l$. Defining the data matrices $Z_-:=\begin{bmatrix}
\zeta(0) & \cdots & \zeta(N-1)
\end{bmatrix}$, $Y_-:=\begin{bmatrix}
y(0) & \cdots & y(N-1)
\end{bmatrix}$ and $U_-$, $E_-$ accordingly, we obtain the data equation
\begin{align} \label{eq:iodateq}
Y_-=\begin{bmatrix}
\bar{A} & \bar{B}\end{bmatrix} Z_-+B_0U_-+E_-,
\end{align}
where $\bar{A}$, $\bar{B}$, $B_0$ are unknown system matrices. We consider the noise not to be measured, i.e., $E_-$ is unknown, while prior knowledge on the cross-covariance of the noise with respect to an instrumental variable is available.
\subsection{Cross-covariance noise bounds}
Consider the sample cross-covariance with respect to the noise $e\in\mathbb{R}^p$ and a variable $r\in\mathbb{R}^M$, given by $\frac{1}{\sqrt{N}}\sum_{t=0}^{N-1}e(t)r(t)^\top=\frac{1}{\sqrt{N}}E_-R_-^\top$. The variable $r$ is instrumental and can be specified by the user (as discussed at the end of this subsection), i.e., it is a given variable in the upcoming analysis. We assume prior knowledge on the noise of the form
\begin{equation} \label{eq:qccbnd}
\frac{1}{N}E_-R_-^\top R_-E_-^\top\preceq H_u, \vspace{-0.15em}
\end{equation}
where $H_u$ is an upper-bound on the squared sample cross-covariance matrix $\frac{1}{\sqrt{N}}E_-R_-^\top$. In generalized form, we write
\vspace{-0.3em}
\begin{equation} \label{eq:iogenqccbnd}
\begin{bmatrix}
I\\ R_-E_-^\top
\end{bmatrix}^\top \begin{bmatrix}
Q_{11} & Q_{12}\\ Q_{12}^\top & Q_{22}
\end{bmatrix}\begin{bmatrix}
I\\ R_-E_-^\top
\end{bmatrix}\succeq 0, \vspace{-0.2em}
\end{equation} 
with $Q_{22}\prec 0$. For $Q_{11}=NH_u$, $Q_{12}=0$ and $Q_{22}=-I$, the bound \eqref{eq:qccbnd} is recovered. Note that~\eqref{eq:iogenqccbnd} can be rewritten as the bound in \cite[Assumption~1]{vanwaarde2020noisytac} with $\Phi_{22} = R_-^\top Q_{22} R_-$, but in general only $\Phi_{22} \preceq 0$ holds, while $\Phi_{22} \prec 0$ is assumed in~\cite{vanwaarde2020noisytac}. This means that the data informativity results of~\cite{vanwaarde2020noisytac} cannot be used to establish data informativity for general cross-covariance noise bounds. However, the matrix S-lemma in~\cite[Theorem~13]{vanwaarde2020noisytac} can still be exploited to obtain necessary conditions for informativity of input-state data with cross-covariance bounds, as shown in Proposition 1 of this paper.

In the state-space representation \eqref{eq:iosysz}, the matrices $A_z$ and $B_z$ contain unknown parameters $\bar{A}$, $\bar{B}$ and $B_0$. Write
\begin{align}
&A_z=\underbrace{\left[\begin{array}{c|c}
\bar{A} & \bar{B}\\ \hline 0 & 0\\ 0 & 0\\ 0 &0
\end{array}\right]}_{=:\Lambda_e}+\underbrace{\left[\begin{array}{cc|cc}
0 & 0 & 0 & 0\\ \hline I_{p(l-1)} & 0 & 0 & 0\\ 0 & 0 & 0& 0\\ 0 & 0 &I_{m(l-1)} & 0
\end{array}\right]}_{=:J_1}\label{eq:ioA}\\
&\text{and } B_z=\underbrace{\left[\begin{array}{c}
B_0\\ \hline 0\\ 0 \\ 0
\end{array}\right]}_{=:B_e} +\underbrace{\left[\begin{array}{c}
0\\ \hline 0\\ I_m \\ 0
\end{array}\right]}_{=:J_2},  \label{eq:ioB}
\end{align}
so that $\Lambda_e$ and $B_e$ are unknown parameter matrices, concatenated with zero rows, and $J_1$ and $J_2$ are binary matrices. The set of all pairs $(\Lambda_e,B_e)$ that are compatible with the data is
\[\Sigma_{(U,Y)}^{RQ}\!:=\!\{\left[\begin{smallmatrix} \bar{A} &\bar{B}\\ 0 & 0\end{smallmatrix}\right]\!,\!\left[\begin{smallmatrix} B_0\\ 0\end{smallmatrix}\right])\,|\, \exists E_- \text{ such that } \eqref{eq:iogenqccbnd} \text{ and }\eqref{eq:iodateq} \text{ hold}\}. \vspace*{-2em}\]
\begin{lemma} \label{lem:ioparam}
Let $Q_e:=\left[\begin{smallmatrix}
H_zQ_{11}H_z^\top & H_zQ_{12}\\ Q_{12}^\top H_z^\top & Q_{22}
\end{smallmatrix}\right]$ and consider
\begin{equation}\label{eq:ioqab}
\begin{bmatrix}
I\\ \Lambda_e^\top\\B_e^\top
\end{bmatrix}^\top\begin{bmatrix}
I & H_zY_-R_-^\top\\ 0 & -Z_-R_-^\top\\ 0 & -U_-R_-^\top
\end{bmatrix}Q_e\begin{bmatrix}
I & H_zY_-R_-^\top\\ 0 & -Z_-R_-^\top\\ 0 & -U_-R_-^\top
\end{bmatrix}^\top\begin{bmatrix}
I\\ \Lambda_e^\top\\B_e^\top
\end{bmatrix}\succeq 0.
\end{equation}
It holds that $\Sigma_{(U,Y)}^{RQ}\!\subseteq\! \{(\Lambda_e,B_e)\,|\, \eqref{eq:ioqab}\text{ holds}\}$. Moreover, if $R_-[Z_-^\top\ U_-^\top]$ has full column rank, then $\Sigma_{(U,Y)}^{RQ}\!\!\!\!=\!\{(\Lambda_e,B_e)\,|\, \eqref{eq:ioqab} \text{ holds}\}$.
\end{lemma}
\begin{proof}
The first statement follows from~\eqref{eq:iodateq}, \eqref{eq:iogenqccbnd}, and the definition of $\Sigma_{(U,Y)}^{RQ}$. Full-column rank of $R_-[Z_-^\top\ U_-^\top]$ and \eqref{eq:ioqab} imply that the last $ml+p(l-1)$ rows of $[\Lambda_e\ B_e]=\operatorname{col}(M_1,0)$ are zero and $E_-:=Y_- -M_1[Z_-^\top\ U_-^\top]$ satisfies~\eqref{eq:iogenqccbnd}. Hence, $(\Lambda_e,B_e)\in\Sigma_{(U,Y)}^{RQ}$ so that $\Sigma_{(U,Y)}^{RQ}=\{(\Lambda_e,B_e)\,|\, \eqref{eq:ioqab} \text{ holds}\}$.
\end{proof}
We denote all $(A_z,B_z)$ that are compatible with the data by
\[\bar{\Sigma}_{(U,Y)}^{RQ}:=\{(\Lambda_e+J_1,B_e+J_2)\,|\, (\Lambda_e,B_e)\in\Sigma_{(U,Y)}^{RQ}\}.\]
We have provided a parametrization of (a superset of) $\Sigma_{(U,Y)}^{RQ}$ based on the data equation \eqref{eq:iodateq}. One can equivalently parametrize $\bar{\Sigma}_{(U,Y)}^{RQ}$ on the basis of the state data equation $Z_+=A_zZ_-+B_zU_-+H_zE_-$. This leads to an equal set $\bar{\Sigma}_{(U,Y)}^{RQ}$, but the `repeated' data in the parametrization contained in $Z_+:=\begin{bmatrix}\zeta(1) & \cdots & \zeta(N)\end{bmatrix}$, would render the evaluation numerically sensitive. Design methods in~\cite{vanwaarde2020noisytac}, \cite{berberich2021} can yield a parametrization $\bar{\Sigma}_{(U,Y)}^{RQ}$ based on this state data equation, but with limited applicability to cross-covariance bounds~\eqref{eq:iogenqccbnd}, i.e., only if the dimension of $r$ satisfies $M\geq N$.

Existing guidelines~\cite{hakvoort95} recommend choosing an instrumental variable $r$ that is correlated with the input $u$, but uncorrelated with the noise $e$. Hence, this suggests the choice of (filtered/delayed versions) of the input for $r$ in an open-loop case for data collection, and an external reference/dithering signal for $r$ in a closed-loop case. Moreover, Lemma~\ref{lem:ioparam} provides an additional guideline for the choice of $r$ to reduce conservatism in the case of input-output data, i.e., $R_-[Z_-^\top\ U_-^\top]$ has full column rank only if the number of instrumental variables $M$ satisfies $M \geq pl+m(l+1)$.
\subsection{Output-feedback control}
Consider a (dynamic) output feedback controller described by the difference equation of the form \cite{persis2020}
\begin{align} \label{eq:ioc}
C(q^{-1})u(t)=D(q^{-1})y(t),
\end{align}
with $C(\xi)\in\mathbb{R}^{m\times m}[\xi]$ and $D(\xi)\in\mathbb{R}^{m\times p}[\xi]$ polynomial matrices given by $C(\xi)=I+C_1\xi+C_2\xi^2+\cdots + C_n\xi^l$ and
$D(\xi)=D_1\xi+D_2\xi^2+\cdots +D_n\xi^l$. We define a state $\zeta_c$ for \eqref{eq:ioc} as $\zeta_c:=\operatorname{col} (u(t-1),\dots,u(t-l),y(t-1),\dots,y(t-l))$, yielding a state-space representation for the controller:
\begin{align}
\zeta_c(t+1)&=\left[\begin{array}{cc|cc}
\multicolumn{2}{c|}{\bar{C}}  & \multicolumn{2}{c}{\bar{D}} \\ \hline I & 0 & 0 & 0\\ 0 & 0 & 0 & 0\\ 0 & 0 & I & 0
\end{array}\right]\zeta_c(t)+\left[\begin{array}{c}
0\\ \hline 0\\ I \\ 0
\end{array}\right]y(t),\label{eq:iosyszc}\\
u(t)&=\begin{bmatrix}
\bar{C} & \bar{D}
\end{bmatrix}\zeta_c(t),\nonumber
\end{align}
with $\bar{C}:=\operatorname{row}(-C_1,\dots,-C_l)$ and $\bar{D}:=\operatorname{row}(D_1,\dots,D_l)$. It follows that $\zeta_c=\left[\begin{smallmatrix}
0 & I\\ I & 0
\end{smallmatrix}\right]\zeta$, which implies that $u(t)=\begin{bmatrix}
\bar{C} & \bar{D}
\end{bmatrix}\zeta_c(t)=\begin{bmatrix}
\bar{D} & \bar{C}
\end{bmatrix}\zeta(t)$.
Hence, the closed-loop system described by \eqref{eq:iosysz} and \eqref{eq:iosyszc} has a representation
\begin{align} \label{eq:iosyszcl}
\zeta(t+1)=\underbrace{\left[\begin{array}{cc|cc}
\multicolumn{2}{c|}{\bar{A}+B_0\bar{D}} & \multicolumn{2}{c}{\bar{B}+B_0\bar{C}}\\
\hline I & 0 & 0 & 0\\
\hline \multicolumn{2}{c|}{\bar{D}} & \multicolumn{2}{c}{\bar{C}}\\
\hline 0 & 0 & I & 0
\end{array}\right]}_{=:A_{\text{cl}}}\zeta(t)+\left[\begin{array}{c}
I\\ \hline 0\\ 0 \\0
\end{array}\right]e(t).
\raisetag{14pt} 
\end{align}
With $K:=\begin{bmatrix}
\bar{D} & \bar{C}
\end{bmatrix}$, the closed-loop system matrix $A_{\text{cl}}$ satisfies $A_\text{cl}=A_z+B_zK$. For some $(A_z,B_z)\in \bar{\Sigma}_{(U,Y)}^{RQ}$, we say that the controller~\eqref{eq:ioc} stabilizes~\eqref{eq:iosys} if the closed-loop system~\eqref{eq:iosyszcl} is stable, i.e., if all eigenvalues of $A_z+B_zK$ are in the open unit disk, since this implies stability of the closed-loop system~\eqref{eq:iosys} and \eqref{eq:ioc}. The notion of stabilization with respect to the state-space representation~\eqref{eq:iosysz} was introduced in~\cite{persis2020} for data-driven stabilization. We note that in the single-input-single-output case, $(A_z,B_z)$ is controllable if and only if $A(\xi)$ and $B(\xi)$ are coprime~\cite{persis2020}.

\section{Informativity for stabilization}
\subsection{Informativity of input-output data}
\begin{definition}
The data $(U,Y)$ are said to be informative for quadratic stabilization by output-feedback controller~\eqref{eq:ioc} if there exist a $K$ and $P\succ 0$ so that
\begin{align*}
\bar{\Sigma}_{(U,Y)}^{RQ}\subseteq \{(A,B)\,|\, (A+BK) P (A+BK)^\top-P\prec 0\}.
\end{align*}
\end{definition}
By \eqref{eq:ioA}-\eqref{eq:ioB}, we find that the existence of $K$ and $P\succ 0$ so that $(A_z+B_zK) P(A_z+B_zK)^\top-P\prec 0$, is equivalent to the existence of $K$ and $P\succ 0$ such that~\eqref{eq:ioqstab} holds true. Now, for the data $(U,Y)$ to be informative for quadratic stabilization, we require the existence of $K$ and $P\succ 0$ so that \eqref{eq:ioqstab} holds for all $(\Lambda_e,B_e)\in\Sigma_{(U,Y)}^{RQ}$. This is precisely a problem that can be solved by the S-procedure; more specifically, by the matrix-valued S-lemma \cite{vanwaarde2020noisytac}.
\begin{figure*}[b]
\vspace{-1em}
\hrulefill
\begin{align} \label{eq:ioqstab}
\begin{bmatrix}
I\\ \Lambda_e^\top\\B_e^\top
\end{bmatrix}^\top \underbrace{\begin{bmatrix}
P-(J_1+J_2K)P(J_1+J_2K)^\top & -(J_1+J_2K)P & -(J_1+J_2K)PK^\top\\
-P(J_1+J_2K)^\top & -P & -PK^\top\\
-KP(J_1+J_2K)^\top & -KP & -KPK^\top
\end{bmatrix}}_{=:\Pi}\begin{bmatrix}
I\\ \Lambda_e^\top\\B_e^\top
\end{bmatrix}\succ 0
\end{align}
\end{figure*}
\vspace*{-.5em}
\begin{thm} \label{thm:ioqstab}
The data $(U,Y)$ are informative for quadratic stabilization by output feedback controller \eqref{eq:ioc} if there exist $L\in\mathbb{R}^{m\times n}$, $P\succ 0$, $\alpha\geq 0$ and $\beta>0$ so that \eqref{eq:ioinfqstabLMI} holds true. Moreover, for $L$ and $P$ such that \eqref{eq:ioinfqstabLMI} is satisfied, $A_\text{cl}=A_z+\allowbreak B_zK$ is stable for all $(A_z,B_z)\in\bar{\Sigma}_{(U_,Y)}^{RQ}$ with $K:=LP^{-1}$.
\end{thm}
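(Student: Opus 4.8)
The plan is to combine the over-approximation of $\Sigma_{(U,Y)}^{RQ}$ furnished by Lemma~II.1 with the (always-valid, sufficient) direction of the matrix-valued S-procedure, and then to linearise the resulting matrix inequality by a Schur complement. I would first recall the equivalences recorded just before the theorem: the data $(U,Y)$ are informative for quadratic stabilization exactly when there exist $K$ and $P\succ 0$ such that \eqref{eq:ioqstab} holds for every $(\Lambda_e,B_e)\in\Sigma_{(U,Y)}^{RQ}$. Since Lemma~II.1 gives $\Sigma_{(U,Y)}^{RQ}\subseteq\{(\Lambda_e,B_e)\mid\eqref{eq:ioqab}\text{ holds}\}$, it suffices to make \eqref{eq:ioqstab} hold for \emph{all} $(\Lambda_e,B_e)$ satisfying \eqref{eq:ioqab}. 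Writing $\mathcal{Z}:=\operatorname{col}(I,\Lambda_e^\top,B_e^\top)$ and letting $\tilde N$ denote the symmetric middle matrix of \eqref{eq:ioqab}, the task reduces to the matrix-quadratic implication $\mathcal{Z}^\top\tilde N\mathcal{Z}\succeq 0\Rightarrow\mathcal{Z}^\top\Pi\mathcal{Z}\succ 0$.

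Second, I would invoke only the elementary direction of the S-procedure: if $\alpha\geq 0$ and $\beta>0$ satisfy $\Pi-\alpha\tilde N\succeq\beta\operatorname{diag}(I,0,0)$, then, because $\mathcal{Z}$ has full column rank (its first block is $I$) and $\mathcal{Z}^\top\operatorname{diag}(I,0,0)\mathcal{Z}=I$, every $\mathcal{Z}$ with $\mathcal{Z}^\top\tilde N\mathcal{Z}\succeq 0$ obeys $\mathcal{Z}^\top\Pi\mathcal{Z}=\mathcal{Z}^\top(\Pi-\alpha\tilde N)\mathcal{Z}+\alpha\,\mathcal{Z}^\top\tilde N\mathcal{Z}\succeq\beta I\succ 0$. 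The point worth stressing is that this implication needs no definiteness of the $(2,2)$-block of the noise bound, which is precisely what fails here (cf.\ the remark after \eqref{eq:iogenqccbnd}); invoking the lossless matrix-valued S-lemma of \cite{vanwaarde2020noisytac} would be the natural route for a converse (hence the necessity claimed for input-state data), but it is not required for the sufficiency asserted in this theorem.

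Third, I would convert $\Pi-\alpha\tilde N-\beta\operatorname{diag}(I,0,0)\succeq 0$ into the LMI \eqref{eq:ioinfqstabLMI}. Using the factorisation $\Pi=\operatorname{diag}(P,0,0)-\mathcal{J}P\mathcal{J}^\top$ with $\mathcal{J}:=\operatorname{col}(J_1+J_2K,\,I,\,K)$, and introducing the change of variables $L:=KP$ so that $\mathcal{J}P=\operatorname{col}(J_1P+J_2L,\,P,\,L)$ is affine in $(L,P)$, a Schur complement with respect to the block $P\succ 0$ renders the inequality affine in the decision variables $L$, $P\succ 0$, $\alpha\geq 0$, $\beta>0$, i.e.\ \eqref{eq:ioinfqstabLMI}. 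Reversing this chain recovers $\bar\Sigma_{(U,Y)}^{RQ}\subseteq\{(A,B)\mid(A+BK)P(A+BK)^\top-P\prec 0\}$ with $K=LP^{-1}$, which is the claimed informativity; moreover, any $(A_z,B_z)$ in this set then satisfies the discrete-time Lyapunov inequality $(A_z+B_zK)P(A_z+B_zK)^\top-P\prec 0$ with $P\succ 0$, so all eigenvalues of $A_{\text{cl}}=A_z+B_zK$ lie in the open unit disk and $A_{\text{cl}}$ is stable. I expect the only real obstacle to be bookkeeping — arranging the Schur complement and the substitution $L=KP$ so that the result is genuinely affine in every decision variable and coincides with the displayed form \eqref{eq:ioinfqstabLMI}; the conceptual content (over-approximation via Lemma~II.1 plus the trivial direction of the S-procedure) is brief.
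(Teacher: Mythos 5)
Your proposal is correct and follows essentially the same route as the paper: Schur-complement \eqref{eq:ioinfqstabLMI} into $\Pi-\alpha\Lambda\succeq\operatorname{diag}(\beta I,0)$ with $L=KP$, combine with the inclusion $\Sigma_{(U,Y)}^{RQ}\subseteq\{(\Lambda_e,B_e)\mid\eqref{eq:ioqab}\}$ from Lemma~II.1, and conclude via the S-procedure that \eqref{eq:ioqstab} holds on all of $\Sigma_{(U,Y)}^{RQ}$. The only (welcome) difference is that you write out the elementary sufficiency direction of the S-procedure explicitly --- observing it needs no Slater or definiteness hypothesis --- where the paper simply cites \cite[Theorem~13]{vanwaarde2020noisytac}.
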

\vspace*{-.5em}
\begin{proof}
Let $L$, $P\succ 0$, $\alpha\geq 0$ and $\beta>0$ exist so that~\eqref{eq:ioinfqstabLMI} holds true and consider the matrix $\Pi$ defined in~\eqref{eq:ioqstab}. By the Schur complement, \eqref{eq:ioinfqstabLMI} is equivalent to
\begin{align*}
\Pi-\alpha \Lambda &\succeq \begin{bmatrix}
\beta I & 0\\ 0 & 0
\end{bmatrix},\quad \text{where } K:=LP^{-1}\quad \text{and}\\
 \Lambda&:=\begin{bmatrix}
I & H_zY_-R_-^\top\\  0 & -Z_-R_-^\top\\ 0 & -U_-R_-^\top
\end{bmatrix}Q_e\begin{bmatrix}
I & H_zY_-R_-^\top\\ 0 & -Z_-R_-^\top\\ 0 & -U_-R_-^\top
\end{bmatrix}^\top.
\end{align*}
Hence, \eqref{eq:ioqstab} holds true for all $(\Lambda_e,B_e)\in\Sigma_{(U,Y)}^{RQ}$, cf. \cite[Theorem~13]{vanwaarde2020noisytac}. This concludes the proof.
\end{proof}

We remark that if there is a $Z$ so that $\bar{Z}^\top \Lambda\bar{Z}\succ 0$ with $\bar{Z}:=\operatorname{col}(I,Z)$, called the generalized Slater condition \cite{vanwaarde2020noisytac}, then~\eqref{eq:ioinfqstabLMI} is also a necessary condition for informativity of input-output data for quadratic stabilization, if $R_-[Z_-^\top\ U_-^\top]$ has full column rank (Lemma~\ref{lem:ioparam}). Unlike in the case of input-state data, which will be discussed next, we note that the generalized Slater condition can in general not hold true in the input-output case if $l\geq 1$, since the noise affects a subspace of the extended state space, yielding a degenerate matrix $\Lambda$. The combination of noisy and noiseless states in $\zeta$ suggests that necessity could potentially be proven in general by a `fusion' of the matrix S-lemma and matrix Finsler's lemma~\cite{vanwaarde2021finsler}.

\begin{figure*}[b]
\vspace{-1em}
    \hrulefill
\begin{align} \label{eq:ioinfqstabLMI}
\begin{bmatrix}
P-\beta I & -J_1P-J_2L & 0 & J_1P+J_2L\\
-PJ_1^\top-L^\top J_2^\top & -P & -L^\top & 0\\
0& -L & 0 & L\\
PJ_1^\top+L^\top J_2^\top & 0 & L^\top & P
\end{bmatrix}-\alpha \begin{bmatrix}
I & H_zY_-R_-^\top\\  0 & -Z_-R_-^\top\\ 0 & -U_-R_-^\top\\ 0 & 0
\end{bmatrix}Q_e\begin{bmatrix}
I & H_zY_-R_-^\top\\ 0 & -Z_-R_-^\top\\ 0 & -U_-R_-^\top\\ 0 & 0
\end{bmatrix}^\top\succeq 0
\end{align}
\end{figure*}

\subsection{Informativity of input-state data} \label{sec:inputstate}
We will now consider a special case, where input-state data is available instead of input output data. That is, we measure a state $y(t)=x(t)$ and the class of systems considered is
\begin{align} \label{eq:sys}
x(t+1)=Ax(t)+Bu(t)+e(t),
\end{align}
with the corresponding data equation
\begin{align} \label{eq:dateq}
X_+=AX_-+B U_-+E_-.
\end{align}

All systems that explain the data $(U_-,X)$ for some $E_-$ satisfying the cross-covariance bound \eqref{eq:iogenqccbnd} are in the set
\begin{equation*}
\Sigma_{(U_-,X)}^{RQ}:=\{(A,B)\,|\, \exists E_- \text{ such that } \eqref{eq:iogenqccbnd} \text{ and } \eqref{eq:dateq} \text{ hold}\}.
\end{equation*}
By \eqref{eq:dateq}, the set of feasible systems is $\Sigma_{(U_-,X)}^{RQ}=\{(A,B)\,|\, (A,B) \text{ satisfies } \eqref{eq:qab}\}$, where
\begin{align}\label{eq:qab}
\begin{bmatrix}
I\\ A^\top\\B^\top
\end{bmatrix}^\top\underbrace{\begin{bmatrix}
I & X_+R_-^\top\\ 0 & -X_-R_-^\top\\ 0 & -U_-R_-^\top
\end{bmatrix}Q\begin{bmatrix}
I & X_+R_-^\top\\ 0 & -X_-R_-^\top\\ 0 & -U_-R_-^\top
\end{bmatrix}^\top}_{=:\Lambda_X}\begin{bmatrix}
I\\ A^\top\\B^\top
\end{bmatrix}\succeq 0.\raisetag{12pt}
\end{align}
\vspace*{-2em}
\begin{remark}
Consider a specific selection of $M=N$ instrumental variables defined by $r_i(t):=\delta(t-i+1)$, $ i=1,\dots,N$, and $\delta:\mathbb{Z}\to \{0,1\}$ is the unit impulse defined as $\delta(0)=1$ and $\delta(x)=0$ for $x\in\mathbb{Z}\setminus\{0\}$. It follows that $R_-=I$ for this choice of instrumental signals. Then, with the generalized quadratic cross-covariance bound~\eqref{eq:iogenqccbnd}, we observe that for this special choice $R_-=I$, we recover the set of feasible systems in~\cite{vanwaarde2020noisytac}, and, hence, the informativity conditions in~\cite{vanwaarde2020noisytac}.
\end{remark}
\vspace{-1.5em}
\begin{definition}
The data $(U_-,X)$ are said to be informative for quadratic stabilization by state feedback if there exist a feedback gain $K$ and $P\succ 0$ so that
\begin{equation*}
\Sigma_{(U_-,X)}^{RQ}\subseteq \{(A,B)\,|\, (A+BK)P (A+BK)^\top-P\prec 0\}. \vspace{-0.25em}
\end{equation*}
\end{definition}

We will now provide a necessary and sufficient condition for informativity of input-state data for quadratic stabilization, given prior knowledge on the cross-covariance \eqref{eq:iogenqccbnd}. Consider the generalized Slater condition
\begin{equation} \label{eq:genslatx}
\begin{bmatrix}
I\\Z
\end{bmatrix}^\top \Lambda_X\begin{bmatrix}
I\\ Z
\end{bmatrix}\succ 0. \vspace{-0.25em}
\end{equation}

\begin{proposition}
Suppose that there exists a $Z$ so that~\eqref{eq:genslatx} holds true. Then the data $(U_-,X)$ are informative for quadratic stabilization if and only if there exist $L\in\mathbb{R}^{m\times n}$, $P\succ 0$, $\alpha\geq 0$ and $\beta>0$ so that
\begin{align} \label{eq:infqstabLMI}
\begin{bmatrix}
P-\beta I & 0 & 0 & 0\\ 0 & -P & -L^\top & 0\\ 0 & -L & 0 & L\\ 0 & 0 & L^\top & P
\end{bmatrix}-\alpha \begin{bmatrix}
\Lambda_X & 0\\ 0 & 0
\end{bmatrix}\succeq 0.
\end{align}
Moreover, $K$ is such that $A+BK$ is stable for all $(A,B)\in\Sigma_{(U_-,X)}^{RQ}$ if $K:=LP^{-1}$ with $L$ and $P\succ 0$ satisfying \eqref{eq:infqstabLMI}.
\end{proposition}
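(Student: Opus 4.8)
The plan is to reduce the statement to the matrix S-lemma of~\cite{vanwaarde2020noisytac}, essentially as in the proof of the preceding theorem, but now also using the generalized Slater condition~\eqref{eq:genslatx} to obtain necessity. First I would fix the reformulation. Writing $\bar{Z}_{AB}:=\operatorname{col}(I,A^\top,B^\top)$, the inequality~\eqref{eq:qab} reads $\bar{Z}_{AB}^\top\Lambda_X\bar{Z}_{AB}\succeq0$, and, in contrast with the input-output case where Lemma~II.1 delivers only an inclusion when no rank hypothesis is imposed, the set identity $\Sigma_{(U_-,X)}^{RQ}=\{(A,B)\,|\,(A,B)\text{ satisfies }\eqref{eq:qab}\}$ recorded below~\eqref{eq:dateq} holds unconditionally. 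A direct expansion shows that $(A+BK)P(A+BK)^\top-P\prec0$ is equivalent to $\bar{Z}_{AB}^\top\Pi_X\bar{Z}_{AB}\succ0$ with $\Pi_X:=\left[\begin{smallmatrix}P&0&0\\0&-P&-PK^\top\\0&-KP&-KPK^\top\end{smallmatrix}\right]$, i.e.\ the matrix $\Pi$ of~\eqref{eq:ioqstab} specialized to $J_1=J_2=0$ (for input-state data the binary-matrix structure encoding the non-minimal representation is absent and $A,B$ themselves are the unknowns). Since $Z:=\operatorname{col}(A^\top,B^\top)$ ranges over all of $\mathbb{R}^{(n+m)\times n}$ while $(A,B)$ ranges over $\mathbb{R}^{n\times n}\times\mathbb{R}^{n\times m}$, the data are informative for quadratic stabilization if and only if there exist $P\succ0$ and $K$ such that $\operatorname{col}(I,Z)^\top\Lambda_X\operatorname{col}(I,Z)\succeq0$ implies $\operatorname{col}(I,Z)^\top\Pi_X\operatorname{col}(I,Z)\succ0$ for every $Z$.

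Then I would apply the matrix S-lemma, whose nondegeneracy hypothesis is precisely the assumed generalized Slater condition~\eqref{eq:genslatx}; note that the lower-right $(n+m)\times(n+m)$ block of $\Lambda_X$ equals $\left[\begin{smallmatrix}X_-R_-^\top\\U_-R_-^\top\end{smallmatrix}\right]Q_{22}\left[\begin{smallmatrix}X_-R_-^\top\\U_-R_-^\top\end{smallmatrix}\right]^\top\preceq0$ since $Q_{22}\prec0$. As in the proof of~\cite[Theorem~13]{vanwaarde2020noisytac}, the implication above then holds if and only if there exist $\alpha\geq0$ and $\beta>0$ with $\Pi_X-\alpha\Lambda_X\succeq\left[\begin{smallmatrix}\beta I&0&0\\0&0&0\\0&0&0\end{smallmatrix}\right]$, the $\beta I$ block encoding the strict inequality carried by the leading identity block. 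It then remains to linearize: setting $L:=KP$ (so $K=LP^{-1}$, well defined since $P\succ0$) replaces $PK^\top,KP,KPK^\top$ by $L^\top,L,LP^{-1}L^\top$, so the inequality becomes $\left[\begin{smallmatrix}P-\beta I&0&0\\0&-P&-L^\top\\0&-L&-LP^{-1}L^\top\end{smallmatrix}\right]-\alpha\Lambda_X\succeq0$, and a Schur complement with respect to the strictly positive $P$-block, extracting $\operatorname{col}(0,0,L)\,P^{-1}\operatorname{col}(0,0,L)^\top$, turns this into~\eqref{eq:infqstabLMI}. Reading the resulting chain of equivalences in both directions then yields the equivalence asserted in the proposition.

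For the second assertion I would traverse the sufficiency direction once more: from $L$, $P\succ0$, $\alpha\geq0$, $\beta>0$ satisfying~\eqref{eq:infqstabLMI}, the Schur complement and the substitution $K:=LP^{-1}$ return $\Pi_X-\alpha\Lambda_X\succeq\left[\begin{smallmatrix}\beta I&0&0\\0&0&0\\0&0&0\end{smallmatrix}\right]$, so that, pre- and post-multiplying by $\bar{Z}_{AB}^\top$ and $\bar{Z}_{AB}$ for any $(A,B)\in\Sigma_{(U_-,X)}^{RQ}$, one obtains $\bar{Z}_{AB}^\top\Pi_X\bar{Z}_{AB}\succeq\beta I+\alpha\,\bar{Z}_{AB}^\top\Lambda_X\bar{Z}_{AB}\succeq\beta I\succ0$, i.e.\ $(A+BK)P(A+BK)^\top-P\prec0$; with $P\succ0$ this certifies that every eigenvalue of $A+BK$ lies in the open unit disk.

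The step I expect to require the most care is the converse (`only if') direction through the matrix S-lemma: one must make explicit that here the generalized Slater condition~\eqref{eq:genslatx} \emph{alone} renders the S-procedure lossless --- no full-column-rank condition on $R_-[X_-^\top\ U_-^\top]$ is needed, unlike the input-output case, because $\Sigma_{(U_-,X)}^{RQ}$ coincides exactly with the sublevel set~\eqref{eq:qab} --- and that only $Q_{22}\prec0$ is used, rather than strict negative definiteness of the full lower-right block of $\Lambda_X$, which in general fails. Everything else is the Schur-complement and left/right-multiplication bookkeeping already carried out for the preceding theorem.
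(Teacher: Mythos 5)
Your route is the same as the paper's: reformulate both the data-consistency set \eqref{eq:qab} and the stabilization condition as quadratic matrix inequalities in $\operatorname{col}(I,A^\top,B^\top)$, invoke the matrix S-lemma of \cite{vanwaarde2020noisytac} (losslessly, thanks to \eqref{eq:genslatx}) to get $\Pi_X-\alpha\Lambda_X\succeq\operatorname{diag}(\beta I,0,0)$, and convert to \eqref{eq:infqstabLMI} via $L:=KP$ and a Schur complement on the $P$-block. The sufficiency argument and the final stability claim are handled exactly as in the paper (which simply defers to the proof of Theorem~1).

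There is, however, one genuine gap in the necessity direction, and it sits exactly where you yourself flag that "the most care" is needed. The matrix S-lemma \cite[Theorem~13]{vanwaarde2020noisytac} does \emph{not} become lossless under the generalized Slater condition alone: besides $\Lambda_{22}^X\preceq 0$ and \eqref{eq:genslatx}, it requires the kernel inclusion $\operatorname{ker}\Lambda_{22}^X\subseteq\operatorname{ker}\Lambda_{12}^X$. You correctly observe that $\Lambda_{22}^X$ is in general only negative \emph{semi}definite here (it equals $\left[\begin{smallmatrix}X_-\\U_-\end{smallmatrix}\right]R_-^\top Q_{22}R_-\left[\begin{smallmatrix}X_-^\top&U_-^\top\end{smallmatrix}\right]$ with $Q_{22}\prec 0$), which is precisely why the kernel condition is not vacuous and must be checked; your proposal instead asserts applicability of the lemma without this check. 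The verification is short and is the only non-routine step in the paper's proof: since $Q_{22}\prec 0$, $\operatorname{ker}\Lambda_{22}^X=\operatorname{ker}R_-\left[\begin{smallmatrix}X_-^\top&U_-^\top\end{smallmatrix}\right]$, and because $\Lambda_{12}^X$ factors through the same matrix, namely $\Lambda_{12}^X=-(Q_{12}+X_+R_-^\top Q_{22})R_-\left[\begin{smallmatrix}X_-^\top&U_-^\top\end{smallmatrix}\right]$, any $x$ with $R_-\left[\begin{smallmatrix}X_-^\top&U_-^\top\end{smallmatrix}\right]x=0$ also satisfies $\Lambda_{12}^Xx=0$. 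With this inclusion added, your argument closes and coincides with the paper's. (You are right that no full-column-rank condition on $R_-\left[\begin{smallmatrix}X_-^\top&U_-^\top\end{smallmatrix}\right]$ is needed in the input-state case, since $\Sigma_{(U_-,X)}^{RQ}$ equals the set defined by \eqref{eq:qab} exactly; that point is correct and is not where the issue lies.)
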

\begin{proof}
$(\Leftarrow)$ This is proven by the same argument as in the proof of Theorem~\ref{thm:ioqstab}. $(\Rightarrow)$ Let the data be informative for quadratic stabilization, i.e., there exist $K$ and $P\succ 0$ so that, with $\Pi$ defined in \eqref{eq:ioqstab} with $J_1=0$, $J_2=0$:
\begin{align*}
&\begin{bmatrix}
I\\ A^\top\\ B^\top
\end{bmatrix}^\top\!\! \Pi\, (\star)\succ 0 \text{ for all } (A,B) \text{ with } \begin{bmatrix}
I\\ A^\top\\ B^\top
\end{bmatrix}^\top \!\! \Lambda_X\, (\star)\succeq 0,\\
&\text{where } \Lambda_X=\left[\begin{array}{c|c}
\Lambda_{11}^X & \Lambda_{12}^X\\ \hline \Lambda_{21}^X & \Lambda_{22}^X
\end{array}\right] := \left[\begin{array}{cc}
I & X_+R_-^\top\\ \hline 0 & -X_-R_-^\top\\ 0 & -U_-R_-^\top
\end{array}\right]Q\ (\star)^\top.
\end{align*}
We will now show that $\operatorname{ker}\Lambda_{22}^X\subseteq \Lambda_{12}^X$, such that necessity follows by the matrix S-lemma \cite{vanwaarde2020noisytac}. First, notice that $\operatorname{ker} \Lambda_{22}^X = \operatorname{ker} R_-\begin{bmatrix}
X_-^\top & U_-^\top
\end{bmatrix}$. Now, take any $x\in\operatorname{ker} \Lambda_{22}^X$. Then $R_-\begin{bmatrix}
X_-^\top & U_-^\top
\end{bmatrix} x=0$. Clearly, we have that $(X_+R_-^\top Q_{22}+Q_{12})R_-\begin{bmatrix}
X_-^\top & U_-^\top \end{bmatrix} x=0$, which implies that $x\in \operatorname{ker} \Lambda_{12}^X$. Since $x\in\operatorname{ker} \Lambda_{22}^X$ was chosen arbitrary, this shows that $\operatorname{ker}\Lambda_{22}^X\subseteq \Lambda_{12}^X$. By $\operatorname{ker}\Lambda_{22}^X\subseteq \Lambda_{12}^X$ and \eqref{eq:genslatx}, there exist $\alpha\geq 0$ and $\beta>0$ so that, by \cite[Theorem~13]{vanwaarde2020noisytac}:
\begin{align*}
\Pi-\alpha \Lambda_X\succeq \begin{bmatrix}
\beta I & 0\\ 0 & 0
\end{bmatrix},
\end{align*}
which is equivalent to~\eqref{eq:infqstabLMI} for $L:=KP$ by the Schur complement. This completes the proof.
\end{proof}

\section{Including performance specifications}
We will now consider the problem of finding a controller~\eqref{eq:ioc} for which the closed-loop system achieves an $\mathcal{H}_\infty$ or $\mathcal{H}_2$ performance bound from the input-output data $(U,Y)$. Consider the performance output $z$, given by $z(t)=C_z\zeta(t)+D_zu(t)$. For any pair $(A_z,B_z)$, the controller~\eqref{eq:ioc} yields the closed loop system
\begin{align*}
\zeta(t+1)&=(A_z+B_zK)\zeta(t)+H_ze(t),\\
z(t)&=(C+DK)\zeta(t).
\end{align*}
Hence, the transfer matrix from $e$ to $z$ is given by
\begin{align*}
T(z) := (C_z+D_zK)(zI-A_z-B_zK)^{-1}H_z,
\end{align*}
for which the $\mathcal{H}_\infty$ and $\mathcal{H}_2$ norm are denoted $\|T\|_{\mathcal{H}_\infty}$ and $\|T\|_{\mathcal{H}_2}$, respectively.

For given $K$, the $\mathcal{H}_\infty$ norm of $T$ is less than $\gamma$, $\|T\|_{\mathcal{H}_\infty}<\gamma$, if and only if there exists $X\succ 0$ such that~\cite[p. 125]{schererweilandLMI} 
\begin{align} \label{eq:Hinf1}
\begin{bmatrix}
X & 0 & A_K^\top X & C_K^\top\\ 0 & \gamma I & H_z^\top X & 0\\ XA_K & X H_z & X & 0\\ C_K & 0 & 0 & \gamma I
\end{bmatrix}\succ 0,
\end{align}
where $A_K:=A_z+B_zK$ and $C_K:=C_z+D_zK$.
\begin{definition}
The data $(U,Y)$ are said to be informative for common $\mathcal{H}_\infty$ control by output-feedback controller \eqref{eq:ioc} with performance $\gamma$ if there exist a $K$ and $X\succ 0$ so that 
\begin{align*}
\bar{\Sigma}_{(U,Y)}^{RQ}\subseteq \{(A_z,B_z)\,|\,\eqref{eq:Hinf1} \text{ holds true} \}.
\end{align*}
\end{definition}
\vspace{-1em}

\begin{figure*}[b]
\vspace{-1em}
    \hrulefill
\begin{align}
\begin{bmatrix}
P-\gamma^{-1}H_zH_z^\top-\beta I & 0 & 0 & J_1P+J_2L & 0\\
0 & 0 & 0 & P & 0\\
0 & 0 & 0 & L & 0\\
P^\top J_1^\top+L^\top J_2^\top & P & L^\top & P & F^\top\\
0 & 0 & 0 & F & \gamma I
\end{bmatrix}-\alpha \!\begin{bmatrix}
I & H_zY_-R_-^\top\\  0 & -Z_-R_-^\top\\ 0 & -U_-R_-^\top\\ 0 & 0\\ 0 & 0
\end{bmatrix}\!Q_e\!\begin{bmatrix}
I & H_zY_-R_-^\top\\ 0 & -Z_-R_-^\top\\ 0 & -U_-R_-^\top\\ 0 & 0\\ 0 & 0
\end{bmatrix}^\top\!\!\!\!\! \succeq 0,\ \begin{bmatrix}
P & F^\top\\ F & \gamma I
\end{bmatrix}\succ 0 \label{eq:ioinfHinf}
\end{align}
\end{figure*}

\begin{thm}
The data $(U,Y)$ are informative for common $\mathcal{H}_\infty$ control with performance $\gamma$  if there exist $L\in\mathbb{R}^{m\times n}$, $P\succ 0$, $\alpha\geq 0$ and $\beta>0$ so that \eqref{eq:ioinfHinf} holds true.
\end{thm}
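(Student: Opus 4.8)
The plan is to follow the proof of Theorem~III.1, the only genuinely new ingredient being that the bounded-real LMI~\eqref{eq:Hinf1} must first be recast into the quadratic matrix form $\bar Z^\top\Pi\,\bar Z\succ 0$, with $\bar Z:=\operatorname{col}(I,\Lambda_e^\top,B_e^\top)$ as in~\eqref{eq:ioqab}, before the matrix S-lemma of~\cite{vanwaarde2020noisytac} can be applied.

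First I would fix $(A_z,B_z)=(\Lambda_e+J_1,B_e+J_2)$ with $(\Lambda_e,B_e)\in\Sigma_{(U,Y)}^{RQ}$, set $K:=LP^{-1}$ and $X:=P^{-1}$, and transform~\eqref{eq:Hinf1}: applying the congruence transformation $\operatorname{diag}(P,I,P,I)$, then a Schur complement to eliminate the disturbance-channel block (the $\gamma I$ entry coupled with $H_z$), and then a second Schur complement to eliminate the block $\bigl[\begin{smallmatrix}P & F^\top\\ F & \gamma I\end{smallmatrix}\bigr]$ with $F:=C_zP+D_zL$. Using $A_zP+B_zL=(J_1P+J_2L)+\Lambda_eP+B_eL$, the result takes the form $\bar Z^\top\Pi\,\bar Z\succ 0$ for a symmetric $\Pi$ built from $P,L,J_1,J_2,H_z,C_z,D_z,\gamma$ and not depending on $\Lambda_e,B_e$. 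The second Schur complement is legitimate precisely under the condition $\bigl[\begin{smallmatrix}P & F^\top\\ F & \gamma I\end{smallmatrix}\bigr]\succ 0$, which is the second inequality of~\eqref{eq:ioinfHinf}; given it, \eqref{eq:Hinf1} holds with $X=P^{-1}$ if and only if $\bar Z^\top\Pi\,\bar Z\succ 0$.

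Next, I would run the same two Schur complements in reverse on $\Pi-\alpha\Lambda\succeq\bigl[\begin{smallmatrix}\beta I & 0\\ 0 & 0\end{smallmatrix}\bigr]$, with $\Lambda$ the matrix defined in the proof of Theorem~III.1, so that $\bar Z^\top\Lambda\bar Z\succeq 0$ is precisely~\eqref{eq:ioqab}: reconstituting the block $\bigl[\begin{smallmatrix}P & F^\top\\ F & \gamma I\end{smallmatrix}\bigr]$ and then the disturbance-channel block returns exactly the first inequality of~\eqref{eq:ioinfHinf}. Hence, if $L$, $P\succ 0$, $\alpha\geq 0$, $\beta>0$ satisfy~\eqref{eq:ioinfHinf}, then $\bigl[\begin{smallmatrix}P & F^\top\\ F & \gamma I\end{smallmatrix}\bigr]\succ 0$ and $\Pi-\alpha\Lambda\succeq\bigl[\begin{smallmatrix}\beta I & 0\\ 0 & 0\end{smallmatrix}\bigr]$. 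For any $(\Lambda_e,B_e)\in\Sigma_{(U,Y)}^{RQ}$, Lemma~II.1 gives $\bar Z^\top\Lambda\bar Z\succeq 0$, and, since the first block row of $\bar Z$ is $I$, pre- and post-multiplying by $\bar Z^\top$ and $\bar Z$ yields $\bar Z^\top\Pi\,\bar Z\succeq\alpha\,\bar Z^\top\Lambda\bar Z+\beta I\succeq\beta I\succ 0$, cf.~\cite[Theorem~13]{vanwaarde2020noisytac}. By the equivalence of the preceding paragraph, \eqref{eq:Hinf1} then holds for this pair with the common $X=P^{-1}$, $K=LP^{-1}$; since the pair ranges over all of $\Sigma_{(U,Y)}^{RQ}$ (equivalently $(A_z,B_z)$ over $\bar\Sigma_{(U,Y)}^{RQ}$), the data $(U,Y)$ are informative for common $\mathcal{H}_\infty$ control with performance $\gamma$.

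I expect the main obstacle to be the bookkeeping in the first step: arranging the congruence and the two Schur complements so that $\Lambda_e$ and $B_e$ enter only through $\bar Z$ and $\Pi$ is genuinely parameter-independent, while cleanly separating off the side condition $\bigl[\begin{smallmatrix}P & F^\top\\ F & \gamma I\end{smallmatrix}\bigr]\succ 0$ that the second Schur complement requires. Once~\eqref{eq:Hinf1} has been put in the form $\bar Z^\top\Pi\,\bar Z\succ 0$, the remainder is verbatim the S-procedure argument of Theorem~III.1, and, as there, only the sufficiency direction of the matrix S-lemma enters, so no generalized Slater condition is needed for the stated ``if'' claim.
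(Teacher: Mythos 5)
Your proposal is correct and follows essentially the same route as the paper: the same congruence with $\operatorname{diag}(P,I,P,I)$ and double Schur complement to bring \eqref{eq:Hinf1} into a quadratic form in $\operatorname{col}(I,\Lambda_e^\top,B_e^\top)$ with a parameter-independent middle matrix, the side condition $P-\gamma^{-1}F^\top F\succ 0$ captured by the second inequality of \eqref{eq:ioinfHinf}, and the sufficiency direction of the matrix S-lemma of \cite{vanwaarde2020noisytac} combined with Lemma~1 to cover all of $\Sigma_{(U,Y)}^{RQ}$. The only cosmetic difference is that you spell out the elementary pre/post-multiplication argument where the paper simply cites \cite[Theorem~13]{vanwaarde2020noisytac}.
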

\begin{proof}
By a congruence transformation of \eqref{eq:Hinf1} with $\operatorname{diag}(P,I,P,I)$ with $P:=X^{-1}$ and the application of the Schur complement (twice), the existence of $K$ and $X\succ 0$ so that \eqref{eq:Hinf1} holds, is equivalent to the existence of $P$ and $L$ so that $P\succ 0$ and
\begin{align} \label{eq:Hinf2}
P-V_z\underbrace{(P-\gamma^{-1} F^\top F)^{-1}}_{=:S}V_z^\top-\gamma^{-1} H_zH_z^\top\succ 0
\end{align} 
and $P-\gamma^{-1}F^\top F \succ 0$, where $V_z:=A_zP+B_zL$ and $F:=C_zP+D_zL$. We can now rewrite \eqref{eq:Hinf2} as
\begin{align*}
\begin{bmatrix}
I\\ A_z^\top\\ B_z^\top
\end{bmatrix}^\top\begin{bmatrix}
P-\gamma^{-1} H_zH_z^\top & 0\\ 0 & -\begin{bmatrix}
P\\ L
\end{bmatrix}S\begin{bmatrix}
P\\ L
\end{bmatrix}^\top
\end{bmatrix}\begin{bmatrix}
I\\ A_z^\top\\ B_z^\top
\end{bmatrix}\succ 0,
\end{align*}
which is equivalent to
\begin{align}
&\begin{bmatrix}
I\\ \Lambda_e^\top\\ B_e^\top
\end{bmatrix}^\top\!\!\!\Pi_{\mathcal{H}_\infty}\!\!\begin{bmatrix}
I\\ \Lambda_e^\top\\ B_e^\top
\end{bmatrix}:=\begin{bmatrix}
I\\ \Lambda_e^\top\\ B_e^\top
\end{bmatrix}^\top\!\!\!\begin{bmatrix}
P-\gamma^{-1}H_zH_z^\top & 0\\ 0 & 0
\end{bmatrix}\!\!\begin{bmatrix}
I\\ \Lambda_e^\top\\ B_e^\top
\end{bmatrix}\nonumber\\
&-\!\begin{bmatrix}
I\\ \Lambda_e^\top\\ B_e^\top
\end{bmatrix}^\top\!\!\begin{bmatrix}
J_1 P+J_2L\\ P\\ L
\end{bmatrix}\!S\ (\star)^\top\!\! \begin{bmatrix}
I\\ \Lambda_e^\top\\ B_e^\top
\end{bmatrix}\!\succ \!0.\label{eq:Hinf3}
\end{align}
Hence, the data $(U,Y)$ are informative for common $\mathcal{H}_\infty$ control with performance $\gamma$ if and only if there exist $P\succ 0$ and $L$ such that $P-\gamma^{-1}F^\top F \succ 0$ and \eqref{eq:Hinf3} holds for all $(\Lambda_e,B_e)\in \Sigma_{(U,Y)}^{RQ}$. By assumption, there exist $P\succ 0$, $L$, $\alpha\geq 0$ and $\beta >0$ such that \eqref{eq:ioinfHinf} holds true. By the Schur complement, \eqref{eq:ioinfHinf} is equivalent to $\Pi_{\mathcal{H}_\infty}-\alpha \Lambda \succeq \left[\begin{smallmatrix}
\beta I & 0\\ 0 & 0
\end{smallmatrix}\right]$, which implies that \eqref{eq:Hinf3} holds for all $(\Lambda_e,B_e)\in \Sigma_{(U,Y)}^{RQ}$.
\end{proof}

The conditions \eqref{eq:ioinfHinf} are linear with respect to $P$, $L$, $\alpha$ and $\beta$. By a straightforward additional application of the Schur complement, \eqref{eq:ioinfHinf} can also be made linear with respect to $\gamma$.

For a given controller parameter matrix $K$, the $\mathcal{H}_2$ norm of $T$ is less than $\gamma$, $\|T\|_{\mathcal{H}_2}<\gamma$, if and only if there exists $X\succ 0$ such that~\cite[Proposition~II.1]{steentjes2020arxiv}, cf.~\cite{vanwaarde2020noisytac}
\begin{align} \label{eq:H21}
\operatorname{trace}H_z^\top X H_z <\gamma^2\ \text{and}\quad X\! \succ\! A_K^\top\! XA_K+C_K^\top C_K.
\end{align}
\vspace*{-2.5em}
\begin{definition}
The data $(U,Y)$ are said to be informative for common $\mathcal{H}_2$ control by output-feedback controller \eqref{eq:ioc} with performance $\gamma$ if there exist a $K$ and $X\succ 0$ so that $\bar{\Sigma}_{(U,Y)}^{RQ}\subseteq \{(A_z,B_z)\,|\,\eqref{eq:H21} \text{ holds true} \}$.
\end{definition}
\vspace*{-1em}
\begin{figure*}[t]
\begin{align}
\begin{bmatrix}
P-\beta I & 0 & 0 & J_1P+J_2L & 0\\
0 & 0 & 0 & P & 0\\
0 & 0 & 0 & L & 0\\
P^\top J_1^\top+L^\top J_2^\top & P & L^\top & P & F^\top\\
0 & 0 & 0 & F & I
\end{bmatrix}-\alpha \begin{bmatrix}
I & H_zY_-R_-^\top\\  0 & -Z_-R_-^\top\\ 0 & -U_-R_-^\top\\ 0 & 0\\ 0 & 0
\end{bmatrix}Q_e\begin{bmatrix}
I & H_zY_-R_-^\top\\ 0 & -Z_-R_-^\top\\ 0 & -U_-R_-^\top\\ 0 & 0\\ 0 & 0
\end{bmatrix}^\top &\succeq 0\label{eq:ioinfH2}
\end{align}
\hrulefill \vspace{-1em}
\end{figure*}

\begin{thm}
The data $(U,Y)$ are informative for common $\mathcal{H}_2$ control with performance $\gamma$  if there exist $L\in\mathbb{R}^{m\times n}$, symmetric $Z$, $P\succ 0$, $\alpha\geq 0$ and $\beta>0$ so that $\operatorname{trace} Z <\gamma^2$, \eqref{eq:ioinfH2} holds true,
\begin{align} \label{eq:ioinfH2p}
\begin{bmatrix}
P & F^\top\\ F & I
\end{bmatrix}\succ 0\quad \text{ and }\quad \begin{bmatrix}
Z & H_z^\top\\ H_z & P
\end{bmatrix}\succeq 0.
\end{align}
\end{thm}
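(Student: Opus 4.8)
The plan is to mirror the proof of the $\mathcal{H}_\infty$ result [cf.\ \eqref{eq:Hinf1}--\eqref{eq:Hinf3}], treating the two parts of the $\mathcal{H}_2$ characterization \eqref{eq:H21} separately: the Lyapunov-type inequality is brought into a quadratic form on which the matrix S-lemma acts, while the trace inequality is absorbed by the epigraph slack matrix $Z$. Only the ``if'' direction is claimed, so the S-lemma is invoked in its trivial direction and no Slater condition is required.

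First I would dualize the Lyapunov inequality in \eqref{eq:H21}. Writing $X\succ A_K^\top XA_K+C_K^\top C_K$ in Schur-complement form, applying a congruence transformation with $\operatorname{diag}(P,P,I)$, $P:=X^{-1}$, and setting $L:=KP$, $V_z:=A_zP+B_zL$, $F:=C_zP+D_zL$, turns this inequality into $P-F^\top F\succ0$ together with $P-V_zSV_z^\top\succ0$, where $S:=(P-F^\top F)^{-1}$ --- the exact analogue of \eqref{eq:Hinf2} with $\gamma\mapsto1$ and without the $\gamma^{-1}H_zH_z^\top$ term, which for $\mathcal{H}_2$ is relocated to the trace conditions. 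Since $V_z=[\,A_z\ B_z\,]\operatorname{col}(P,L)$, the second inequality is the quadratic form
\[
\begin{bmatrix}I\\ A_z^\top\\ B_z^\top\end{bmatrix}^{\!\top}\!\!\left(\begin{bmatrix}P & 0\\ 0 & 0\end{bmatrix}-\begin{bmatrix}P\\ L\end{bmatrix}S\begin{bmatrix}P\\ L\end{bmatrix}^{\!\top}\right)\!\begin{bmatrix}I\\ A_z^\top\\ B_z^\top\end{bmatrix}\succ0,
\]
and substituting $A_z=\Lambda_e+J_1$, $B_z=B_e+J_2$ (so $V_z=[\,I\ \Lambda_e\ B_e\,]\operatorname{col}(J_1P+J_2L,P,L)$) produces the same inequality with $\operatorname{col}(I,A_z^\top,B_z^\top)$ replaced by $\operatorname{col}(I,\Lambda_e^\top,B_e^\top)$ and the inner matrix --- call it $\Pi_{\mathcal{H}_2}$ --- built from the blocks $J_1P+J_2L$, $P$, $L$, $S$, matching the off-diagonal entries of \eqref{eq:ioinfH2}.

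For the trace part, I would introduce the symmetric slack $Z$ and observe that, with $X=P^{-1}\succ0$, $\operatorname{trace}H_z^\top XH_z<\gamma^2$ holds iff there is a symmetric $Z$ with $\operatorname{trace}Z<\gamma^2$ and $Z\succeq H_z^\top P^{-1}H_z$, the latter being, by a Schur complement, the second inequality in \eqref{eq:ioinfH2p}. Combining, the data are informative for common $\mathcal{H}_2$ control with performance $\gamma$ iff there exist $P\succ0$, $L$ and symmetric $Z$ with $\operatorname{trace}Z<\gamma^2$, with \eqref{eq:ioinfH2p} holding, and with $\operatorname{col}(I,\Lambda_e^\top,B_e^\top)^\top\Pi_{\mathcal{H}_2}\,(\star)\succ0$ for all $(\Lambda_e,B_e)\in\Sigma_{(U,Y)}^{RQ}$. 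To close the argument I would invoke Lemma~II.1, which gives $\Sigma_{(U,Y)}^{RQ}\subseteq\{(\Lambda_e,B_e)\mid\eqref{eq:ioqab}\}$, i.e.\ $\operatorname{col}(I,\Lambda_e^\top,B_e^\top)^\top\Lambda\,(\star)\succeq0$ with the data matrix $\Lambda$ from the proof of Theorem~III.1, and then show that the hypothesis \eqref{eq:ioinfH2} is, via one further Schur complement, equivalent to $\Pi_{\mathcal{H}_2}-\alpha\Lambda\succeq\operatorname{diag}(\beta I,0)$; testing this against $\operatorname{col}(I,\Lambda_e^\top,B_e^\top)$ and using $\alpha\geq0$ with \eqref{eq:ioqab} yields the required strict inequality on all of $\Sigma_{(U,Y)}^{RQ}$, cf.\ \cite[Theorem~13]{vanwaarde2020noisytac}.

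The routine but error-prone part --- and the only real obstacle --- is arranging the Schur complements so that \eqref{eq:ioinfH2} emerges affine in $(P,L,Z,\alpha,\beta)$: one must eliminate $P^{-1}$ from the Lyapunov inequality while keeping $P-F^\top F$ in the $S$ slot, then re-expose $S^{-1}=P-F^\top F$ as a diagonal block and Schur-complement $F^\top F$ a second time through the trailing $\operatorname{col}(F,I)$ block. This is exactly the chain of manipulations in the $\mathcal{H}_\infty$ proof with $\gamma$ set to $1$ and the $\gamma^{-1}H_zH_z^\top$ term moved into \eqref{eq:ioinfH2p} through the slack $Z$, so no genuinely new difficulty arises beyond careful block bookkeeping.
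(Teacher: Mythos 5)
Your proposal is correct and follows essentially the same route as the paper: congruence with $\operatorname{diag}(P,P,I)$ and repeated Schur complements to reduce \eqref{eq:H21} to $P-F^\top F\succ 0$, the quadratic inequality \eqref{eq:H22}/\eqref{eq:H23} in $\operatorname{col}(I,\Lambda_e^\top,B_e^\top)$, and the trace condition $\operatorname{trace}H_z^\top P^{-1}H_z<\gamma^2$ absorbed by the slack $Z$ through the second LMI in \eqref{eq:ioinfH2p}; sufficiency then follows from Lemma~1 and the easy direction of the matrix S-lemma exactly as in the $\mathcal{H}_\infty$ proof. The paper compresses this last step to ``an analogue argument as in the proof of Theorem~2,'' which is precisely the Schur-complement/S-procedure chain you spell out.
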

\begin{proof}
By a congruence transformation of \eqref{eq:H21} with $P:=X^{-1}$ and the Schur complement (in both directions), it follows that \eqref{eq:H21} is equivalent to $P-F^\top F\succ 0$,
\begin{align} \label{eq:H22}
P-V_z(P-F^\top F)^{-1}V_z^\top\succ 0
\end{align}
and $\operatorname{trace} H_z^\top P^{-1} H_z <\gamma^2$.
Now, we can rewrite \eqref{eq:H22} as
\begin{align*}
\begin{bmatrix}
I\\ A_z^\top\\ B_z^\top
\end{bmatrix}^\top \begin{bmatrix}
P & 0\\ 0 & -\begin{bmatrix}
P\\ L
\end{bmatrix}(P-F^\top F)^{-1}\begin{bmatrix}
P\\ L
\end{bmatrix}^\top
\end{bmatrix}\begin{bmatrix}
I\\ A_z^\top\\ B_z^\top
\end{bmatrix}\succ 0,
\end{align*}
which, by~\eqref{eq:ioA}-\eqref{eq:ioB}, holds if and and only if
\begin{align} \label{eq:H23}
\begin{bmatrix}
I\\ \Lambda_e^\top\\ B_e^\top
\end{bmatrix}^\top\!\!\!\begin{bmatrix}
J_1 P+J_2L\\ P\\ L
\end{bmatrix}\!S(\star)^\top\!\! \begin{bmatrix}
I\\ \Lambda_e^\top\\ B_e^\top
\end{bmatrix}\!\!\prec\!\! \begin{bmatrix}
I\\ \Lambda_e^\top\\ B_e^\top
\end{bmatrix}^\top\!\!\!\!\begin{bmatrix}
P & 0\\ 0 & 0
\end{bmatrix}\!\!\begin{bmatrix}
I\\ \Lambda_e^\top\\ B_e^\top
\end{bmatrix}\!.
\end{align}
There exist $P\succ 0$, $L$ so that \eqref{eq:H23}, $P-F^\top F\succ 0$ and $\operatorname{trace} H_z^\top P^{-1} H_z <\gamma^2$ if and only if there exist $P\succ 0$, $L$, $Z$ so that \eqref{eq:H23}, $P-F^\top F\succ 0$, $Z-H_z^\top P^{-1}H_z\succeq 0$ and $\operatorname{trace} Z <\gamma^2$. Indeed, for $Z:=H_z^\top P^{-1} H_z$ we infer $\operatorname{trace} Z<\gamma^2$. Sufficiency follows from $H_z^\top P^{-1} H_z\preceq Z\ \Rightarrow\ \operatorname{trace}H_z^\top P^{-1} H_z\leq  \operatorname{trace}Z$. Hence, the data $(U,Y)$ are informative for common $\mathcal{H}_2$ control with performance $\gamma$ if and only if there exist $P\succ 0$, $L$, $Z$ so that $Z-H_z^\top P^{-1}H_z\succeq 0$, $\operatorname{trace} Z <\gamma^2$, $P-F^\top F \succ 0$ and \eqref{eq:H23} hold for all $(\Lambda_e,B_e)\in \Sigma_{(U,Y)}^{RQ}$. By assumption, $\operatorname{trace} Z <\gamma^2$ is satisfied, $P-F^\top F \succ 0$, $Z-H_z^\top P^{-1}H_z\succeq 0$ follow by \eqref{eq:ioinfH2p} and via an analogue argument as in the proof of Theorem~2, \eqref{eq:H23} holds for all $(\Lambda_e,B_e)\in \Sigma_{(U,Y)}^{RQ}$ by \eqref{eq:ioinfH2}.
\end{proof}
\vspace*{-0.5em}
\begin{remark}
The conditions in Theorem~2/3 are also necessary for informativity of input-state data for $\mathcal{H}_\infty$/$\mathcal{H}_2$ control, where $H_z=I$, $J_1=0$, $J_2=0$ and $Y_-$ and $Z_-$ are replaced by $X_+$ and $X_-$, if \eqref{eq:genslatx} holds for some $Z$.
\end{remark}

\section{Numerical example}
Consider the system \eqref{eq:sys} with true system matrices
\begin{align*}
A_0=\begin{bmatrix}
-0.2414 &  -0.8649 &   0.6277\\
    0.3192 &  -0.0301  &  1.0933\\
    0.3129  & -0.1649   & 1.1093
\end{bmatrix},\quad B_0= \begin{bmatrix}
1 & 0\\
    0 & 2\\
    1 & 1
\end{bmatrix}.
\end{align*}
and consider a performance output $z(t)=[0\ 0\ 1]x(t)$. The objective is to determine if input state data collected from the system are informative for common $\mathcal{H}_2$ control. We consider a noise signal $e(t)$ with a uniform distribution, taking values from the closed ball $\{e\in \mathbb{R}^3\,|\, \|e\|_2^2\leq 0.35\}$. First, we consider this noise bound to be known, represented by the noise model $E_-\in \{E_-\,|\, E_-E_-^\top \preceq 0.35NI\}$ as described in \cite[Section VI.A]{vanwaarde2020noisytac}. This can be represented by the noise model~\eqref{eq:qccbnd} with $R_-=I$, cf.~\cite[Equation (5)]{vanwaarde2020noisytac}. We consider the informativity analysis for various data lengths $N$ ranging from $N=2$ to $N=250$. For each data length $N$, we generate $50$ data sets. Given the bound on $E_-$, we can verify informativity for common $\mathcal{H}_2$ control via Theorem~17 in \cite{vanwaarde2020noisytac}. We find that the generalized Slater condition~\cite[Equation (16)]{vanwaarde2020noisytac}, holds true for all data sets, thus the data are informative for common $\mathcal{H}_2$ control with performance $\gamma$ if and only if the condition \cite[Equation~($\mathcal{H}_2$)]{vanwaarde2020noisytac} is feasible. The relative number of data sets for which this necessary and sufficient condition is feasible for some $\gamma>0$ is visualized in Figure~\ref{fig:nodatfeasH2} for each data length $N$, in red. Naturally, if the condition is not feasible for any $\gamma>0$, the data are actually not informative for feedback stabilization, although the true system \emph{is} stable.

\begin{figure}[!t]
\vspace*{-0.8em}
      \centering
      \subfloat[Informativity for $\mathcal{H}_2$ control\label{fig:nodatfeasH2}]{%
       \includegraphics[scale=0.425]{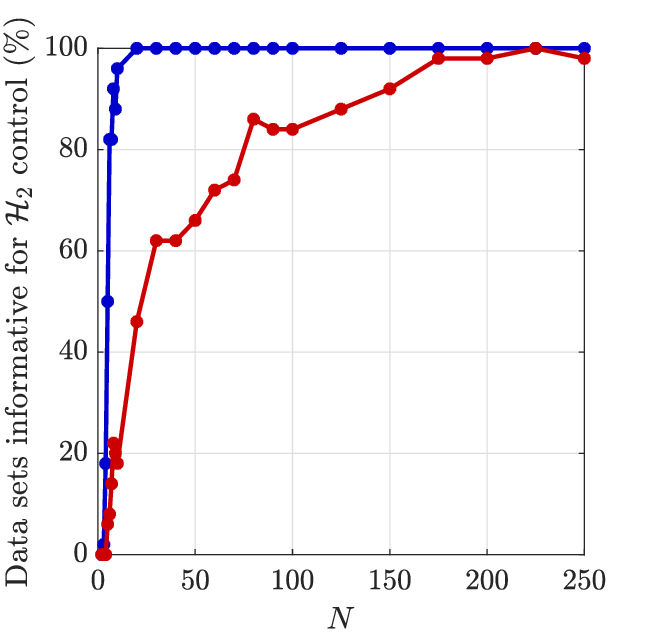}}
    \subfloat[$\mathcal{H}_2$ performance from data\label{fig:normfeasH2}]{%
       \includegraphics[scale=0.425]{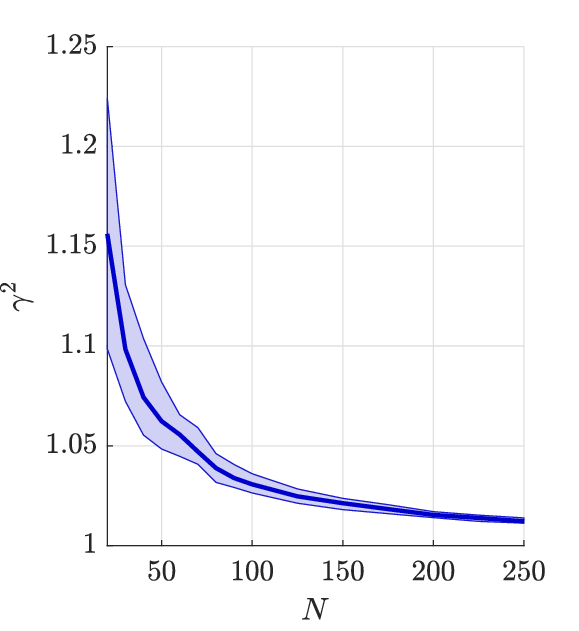}}
      \caption{(a) Number of input-state data sets that are informative for $\mathcal{H}_2$ control versus data length $N$ for noise-norm bounds (red) and quadratic cross-covariance bounds (blue) and (b) feasible $\gamma^2$ obtained versus data length $N$ with quadratic cross-covariance bounds.}\label{fig:infH2}\vspace*{-1.2em}
    \end{figure}

Now, we consider the quadratic cross-covariance bound~\eqref{eq:qccbnd} for the noise. We choose an instrumental variable that contains lagged versions of the input:
\[r(t):=\operatorname{col}(u(t),u(t-1),u(t-2),\dots, u(t-8),u(t-9)). \vspace{-0.5em}\]
We assume prior knowledge in the sense that $E_-\in\mathcal{E}_{RQ}=\{E_-\,|\, E_-R_-^\top R_-E_-^\top\preceq N H_u\}$, where $H_u$ is taken as $H_u=I$, independent of $N$. The cross-covariance bounds hold true for all generated data sets.  We verify that there exists some $Z$ so that \eqref{eq:genslatx} holds true for all data sets. Hence, by Remark 2, the data are informative for common $\mathcal{H}_2$ control with performance $\gamma$ if and only if the conditions in Theorem~3 are feasible. The relative number of data sets for which this necessary and sufficient condition is feasible for some $\gamma>0$ is visualized in Figure~\ref{fig:nodatfeasH2} for each data length $N$, in blue. For $N\geq 20$, all data sets are informative for common $\mathcal{H}_2$ control. For these data sets, the smallest $\mathcal{H}_2$ norm upper bounds $\gamma^2$ are visualized in Figure~\ref{fig:normfeasH2}, where the median performance is indicated by a solid line and the shaded area is bounded by the 25\textsuperscript{th} and 75\textsuperscript{th} percentiles. In comparison, the $\mathcal{H}_2$ norm that can be achieved by a state feedback controller with knowledge of $(A_0,B_0)$ is equal to $1.000$, which therefore is a benchmark that cannot be outperformed by any data-based controller.

Now, consider that noisy output measurements are available instead of state measurements. Consider system \eqref{eq:iosys} with $A(q^{-1})$ and $B(q^{-1})$ such that $T_0(q^{-1})=A^{-1}(q^{-1})B(q^{-1})$ with $T_0:=C_0(qI-A_0)^{-1}B_0$, where $C_0$ is the output matrix. We consider three cases: $C_0=[1\ 0\ 1]$, $C_0=[0\ 1\ 0]$, and $C_0=[1\ 0\ 0]$. The noise is uniformly drawn from $[-0.35,\, 0.35]$. For each choice of output, we generate 50 data sets for data lengths ranging from $N=2$ to $N=250$. We choose an instrumental signal containing lagged input signals as before, which is therefore independent on the choice of output. The upper-bound is chosen $H_u=0.3$, which holds for all data sets. By Theorem~3, feasibility of the conditions for informativity for $\mathcal{H}_2$ control for some $\gamma>0$ is verified for each data set. The results are depicted in Figure~\ref{fig:IOnodatfeasH2}. We observe that the data sets are not informative for low data lengths, which can be expected. For increasing data length, informativity becomes dependent on the choice of output. For $N=30$, for example, $90\%$ of the data sets yielded feasible informativity conditions for the choice of $C_0=[1\ 0\ 0]$, compared to less than $50\%$ of the data sets for the other two choices for $C_0$.

\begin{figure}[!t]
\centering
\includegraphics[scale=.425]{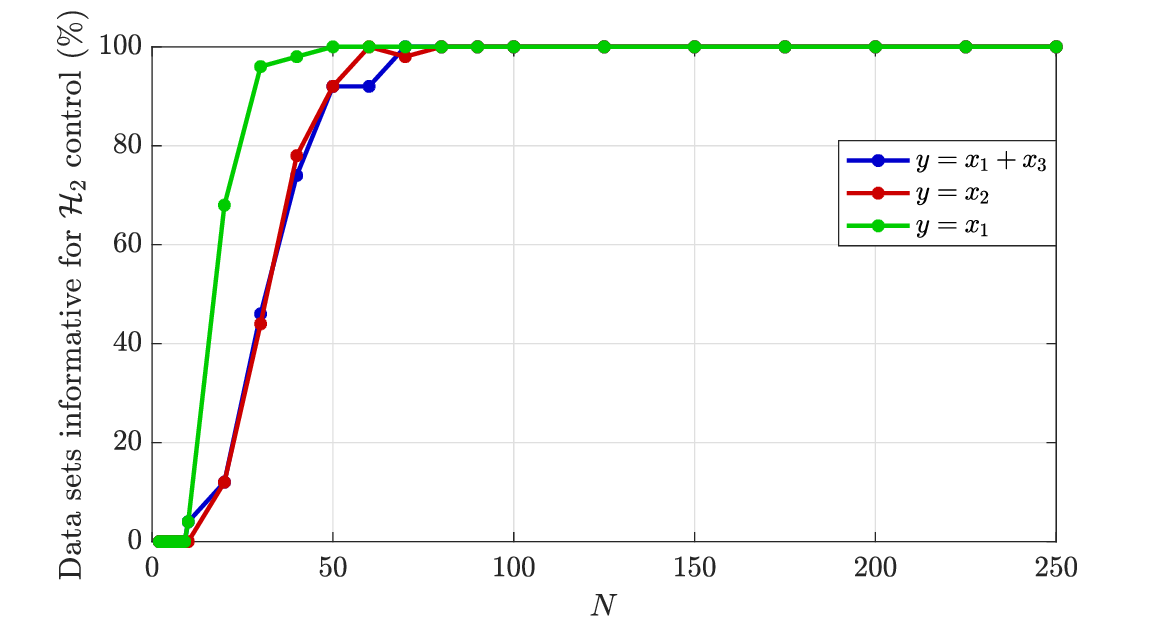}
\caption{Effect of the choice of output on informativity of input-output data for $\mathcal{H}_2$ control.}
\label{fig:IOnodatfeasH2} \vspace*{-1em}
\end{figure}
\section{Concluding remarks}
We have considered the problem of informativity of input-output data for control, with prior knowledge of the noise in the form of quadratic sample cross-covariance bounds. Sufficient informativity conditions for stabilization, $\mathcal{H}_\infty$ and $\mathcal{H}_2$ control via dynamic output feedback were derived, which are also necessary if  the state is measured. We have provided a numerical case study where data-informativity can be concluded with cross-covariance bounds, while the data are concluded to be non-informative with magnitude bounds. Finally, we have illustrated how the choice of output affects the informativity of input-output data via a numerical example.
\vspace{-0.5em}
\bibliographystyle{IEEEtran}
\bibliography{../rfrncs21a}

\end{document}